\title{The middle hedgehog of a planar convex body}
\author{Rolf Schneider}
\date{}
\newcommand{\x}{{\bf x}}
\newcommand{\ur}{\mbox{\boldmath$u$}}
\newtheorem{theorem}{Theorem}
\newtheorem{lemma}{Lemma}[section]
\begin{document}
\maketitle

\begin{abstract}
A convexity point of a convex body is a point with the property that the union of the body and its reflection in the point is convex. It is proved that in the plane a typical convex body (in the sense of Baire category) has infinitely many convexity points. The proof makes use of the `middle hedgehog' of a planar convex body $K$, which is the curve formed by the midpoints of all affine diameters of $K$. The stated result follows from the fact that for a typical planar convex body the convex hull of the middle hedgehog has infinitely many exposed points.
\end{abstract}

\section{Introduction}\label{sec1}

The following question was posed to me by Shiri Artstein--Avidan: `Does every convex body $K$ in the plane have a point $z$ such that the union of $K$ and its reflection in $z$ is convex?' After some surprise about never having come across this simple question, and after some fruitless attempts to find counterexamples, this finally led to the following answer ({\cite{Sch16}). Here we call the point $z$ a {\em convexity point} of $K$ if $(K-z)\cup(z-K)$ is convex. 

\begin{theorem}\label{Thm1}
A convex body in the plane which is not centrally symmetric has three affinely independent convexity points.
\end{theorem}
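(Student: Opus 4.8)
The plan is to reduce the statement to a property of the middle hedgehog, and the first step is a characterisation of convexity points through affine diameters. Write the support function of $K$ as $h_K = p + q$, where $p(u) = \tfrac12(h_K(u)+h_K(-u))$ and $q(u) = \tfrac12(h_K(u)-h_K(-u))$ are its even and odd parts, and for $u \in S^1$ put $S_u := \tfrac12(F(K,u) + F(K,-u))$, the set of midpoints of the affine diameters of $K$ whose two parallel supporting lines have outer unit normals $\pm u$; this is a segment (possibly a single point) lying on the line $\{x : \langle x,u\rangle = q(u)\}$, and the union of all the $S_u$ is the middle hedgehog. I would first prove the lemma: $z$ is a convexity point of $K$ if and only if $z \in S_u$ for every $u$ with $\langle z,u\rangle = q(u)$. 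The proof is short: $z$ is a convexity point iff $(K-z) \cup (z-K)$ is convex iff, for every direction $u$, the face of $\operatorname{conv}((K-z)\cup(z-K))$ with outer normal $u$ lies in that union; when $h_{K-z}(u) \ne h_{K-z}(-u)$ that face is a face of $K-z$ or of $z-K$ and causes no trouble, while $h_{K-z}(u) = h_{K-z}(-u)$ is exactly the condition $\langle z,u\rangle = q(u)$, and then the face is the segment spanned by the two collinear segments $F(K,u)-z$ and $z-F(K,-u)$, which these segments jointly cover precisely when $z \in S_u$ — a one-dimensional computation.

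Next I would prove the transfer principle: every exposed point of $H := \operatorname{conv}(\text{middle hedgehog})$ is a convexity point of $K$. If $z$ is an exposed point of $H$, then $z$ is an extreme point of $H$, hence $z$ lies in the middle hedgehog itself, so $z \in S_w$ for some $w$ and $\langle z,w\rangle = q(w)$. It remains to check that $z \in S_u$ for every other direction $u$ with $\langle z,u\rangle = q(u)$. For such a $u$ both $z$ and the segment $S_u$ lie on the line $\{\langle\cdot,u\rangle = q(u)\}$ and $S_u \subseteq H$; since $z$ is exposed it cannot be an interior point of the chord this line cuts from $H$, which should force $z$ into $S_u$. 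Making this last step airtight — ruling out that the exposed point $z$ lies on that chord strictly beyond an endpoint of $S_u$ — is where I expect the real work, and I regard it as the main obstacle of the argument: one has to control how the middle hedgehog reaches $z$ along each of its critical lines.

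The third step shows that $H$ is two-dimensional whenever $K$ is not centrally symmetric. Suppose, to the contrary, that the middle hedgehog is contained in a line $\ell$. Since each $S_u$ lies both on $\ell$ and on $\{\langle\cdot,u\rangle = q(u)\}$, it reduces to a single point unless $u$ is perpendicular to $\ell$, so $K$ has a nondegenerate face only in the two directions normal to $\ell$. Parametrise $\partial K$ by outer normal angle $\theta$ through a support point $a(\theta)$ and form the midpoint curve $m(\theta) = \tfrac12(a(\theta)+a(\theta+\pi))$, which lies on $\ell$ by hypothesis. Using $a(\theta_2)-a(\theta_1) = \int_{\theta_1}^{\theta_2} u'(\theta)\,\mathrm{d}S(K,\theta)$, where $S(K,\cdot)$ is the surface area measure and $u'(\theta)$ the unit tangent, one gets $m(\theta_2)-m(\theta_1) = \tfrac12\int_{\theta_1}^{\theta_2} u'(\theta)\,\mathrm{d}\mu(\theta)$ with $\mu := S(K,\cdot) - S(-K,\cdot)$. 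If every $m(\theta)$ lies on $\ell$, this increment is always parallel to $\ell$, which forces $\mu$ to be concentrated on the two directions normal to $\ell$; the relation $\int_{S^1} u\,\mathrm{d}S(K,u) = 0$ then makes the two possible atoms of $\mu$ cancel, so $\mu = 0$, i.e. $S(K,\cdot) = S(-K,\cdot)$, and by the uniqueness part of Minkowski's theorem $K$ is a translate of $-K$ — contradicting the hypothesis. (For smooth $K$ this is just the observation that the velocity $\tfrac12(\rho(\theta)-\rho(\theta+\pi))\,u'(\theta)$ of $m$, with $\rho = h_K + h_K''$, can be parallel to $\ell$ for all $\theta$ only if $\rho$ is $\pi$-periodic, i.e. $q+q'' \equiv 0$.) Hence $H$ has nonempty interior.

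Finally, a two-dimensional compact convex set has three affinely independent extreme points — otherwise the convex hull of its extreme points, which is the whole set, would lie in a line — and since exposed points are dense in extreme points (Straszewicz's theorem), three of its exposed points can be chosen affinely independent. By the transfer principle these are three affinely independent convexity points of $K$, which proves the statement.
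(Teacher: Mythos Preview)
Your overall plan matches the route taken in \cite{Sch16} (to which this paper defers for Theorem~\ref{Thm1}): characterise convexity points through the middle sets, prove that exposed points of $H={\rm conv}\,{\mathcal M}_K$ are convexity points (this is Lemma~\ref{Lem1}), show $H$ has nonempty interior when $K$ is not centrally symmetric, and conclude via Straszewicz. Your Steps~1, 3 and~4 are correct; Step~3 is more elaborate than necessary but sound.

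You have correctly isolated the real difficulty in Step~2, and the partial argument you give does not close it: knowing that the exposed point $z$ is an endpoint of the chord $H\cap\{\langle\cdot,u\rangle=q(u)\}$ does not place $z$ in the sub-segment $S_u$ of that chord. The missing device, used in \cite{Sch16} and recalled in this paper within the proof of Lemma~\ref{Lem4}, is a monotonicity argument along the family of middle lines. Put $z=0$ with the exposing line along $e_1$, so that $\langle y,e_2\rangle>0$ for all $y\in H\setminus\{0\}$. For $\varphi\in(-\pi/2,\pi/2)$ let $f(\varphi)e_1$ be the intersection of the middle line $M_K(\ur(\varphi))$ with the $e_1$-axis; then $f(\varphi)=q(\ur(\varphi))/\cos\varphi$ in your notation, and wherever $h_K$ is differentiable,
\[
f'(\varphi)=\frac{\langle m_K(\ur(\varphi)),e_2\rangle}{\cos^2\varphi}\ge 0,
\]
the inequality coming from the exposed-point hypothesis. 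Since $f$ is locally Lipschitz it is weakly increasing, so $I:=\{\varphi:f(\varphi)=0\}$ is a closed interval; on its interior $f'=0$ forces $\langle m_K(\ur(\varphi)),e_2\rangle=0$ and hence (by exposedness) $0\in Z_K(\ur(\varphi))$, and continuity extends this to all of $I$. Since $\langle 0,u\rangle=q(u)$ says exactly that $f$ vanishes at the corresponding angle, this yields $0\in S_u$ whenever $\langle 0,u\rangle=q(u)$, and your characterisation lemma then gives the transfer principle. This monotonicity step is the heart of the proof and is what your proposal lacks.
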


A triangle and a Reuleaux triangle are examples of convex bodies with precisely three convexity points. This then raises the question whether the existence of just three convexity points is `typical'. We recall the meaning of this terminology. The space ${\mathcal K}^2$ of convex bodies in the plane with the Hausdorff metric is a complete metric space and hence a Baire space, that is, a topological space in which any intersection of countably many dense open sets is still dense. A subset of a Baire space is called {\em comeager} or {\em residual} if its complement is a {\em meager} set, that is, a countable union of nowhere dense sets (also said to be of {\em first Baire category}). The intersection of countable many comeager sets in a Baire space is still dense, which is a good reason to consider comeager sets as `large'. Therefore, one says that `most' convex bodies in the plane have a certain property, or that a `typical' planar convex body has this property, if the set of bodies with this property is comeager in ${\mathcal K}^2$. With this definition, we prove the following.

\begin{theorem}\label{Thm2}
A typical convex body in the plane has infinitely many convexity points.
\end{theorem}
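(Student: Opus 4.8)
\medskip
The plan is to reduce Theorem~\ref{Thm2} to a statement about the middle hedgehog alone, and then to prove that statement by Baire category.

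\emph{Characterization of convexity points.} I would first establish that, for a strictly convex planar body $K$, the convexity points of $K$ are exactly the exposed points of $\operatorname{conv}(M_K)$. Write $h$ for the support function of $K$. The set $(K-z)\cup(z-K)$ is convex iff it equals its convex hull; since $h_{K-z}(u)=h(u)-z\cdot u$ and $h_{z-K}(u)=h(-u)+z\cdot u$, these two support functions agree exactly at the directions $u$ with $z\cdot u=g(u)$, where $g(u):=\tfrac12\bigl(h(u)-h(-u)\bigr)$ is the odd part of $h$; and $g$ is precisely the support function of $M_K$, because the midpoint of the affine diameter with outer normal $u$ is $m(u)=g(u)\,u+g'(u)\,u^{\perp}$. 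At such a ``contact direction'' $u$ the bodies $K-z$ and $z-K$ share a support line, on which each of them meets only in its single support point, and the union is convex along that line iff these support points coincide, i.e.\ iff $z=m(u)$. Hence $z$ is a convexity point iff every $u$ with $z\cdot u=g(u)$ forces $z=m(u)$, and the remaining, more delicate, point is that this condition is equivalent to $z$ being an exposed point of $\operatorname{conv}(M_K)$. (Consistency check for Theorem~\ref{Thm1}: $\operatorname{conv}(M_K)$ is a single point exactly when all affine diameters share a midpoint, i.e.\ when $K$ is centrally symmetric; otherwise it is two-dimensional and hence has at least three affinely independent exposed points.) Granting this, Theorem~\ref{Thm2} becomes: for a typical $K$, $\operatorname{conv}(M_K)$ has infinitely many exposed points.

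\emph{Reduction to strictly convex bodies.} The bodies that are not strictly convex form a meager subset of $\mathcal K^2$ (for each $k\in\mathbb N$ the bodies whose boundary contains a segment of length $\ge 1/k$ are closed and have empty interior), and any property comeager within the strictly convex bodies is comeager in $\mathcal K^2$. On the strictly convex bodies the support point map $u\mapsto x_K(u)$ is single-valued, and $K\mapsto x_K$ is continuous in the supremum norm at every such $K$; consequently $K\mapsto M_K$ and $K\mapsto\operatorname{conv}(M_K)$ are continuous for the Hausdorff metric.

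\emph{Baire category argument.} A planar compact convex set has only finitely many exposed points iff it is a (possibly degenerate) polygon, since by Straszewicz's theorem the exposed points are dense in the extreme points. So I would write the ``bad'' set as $\bigcup_{n\ge 1}\mathcal C_n$, where $\mathcal C_n$ is the set of strictly convex $K$ for which $\operatorname{conv}(M_K)$ is a polygon with at most $n$ vertices. Each $\mathcal C_n$ is closed, being the preimage under the continuous map $K\mapsto\operatorname{conv}(M_K)$ of the closed set of convex polygons with at most $n$ vertices. It then remains to show each $\mathcal C_n$ has empty interior, i.e.\ that the bodies with $\operatorname{conv}(M_K)$ not a polygon are dense. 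Given $K$ and $\varepsilon>0$, I would first replace $K$ by a nearby $C^{\infty}$ body of positive curvature, and then add a small high-frequency ripple to its support function, $h_{K'}(\theta)=h(\theta)+\delta\sin(N\theta)$ with $N$ large and odd and $\delta$ of order $N^{-2}$, chosen small enough that $h_{K'}$ is still a support function. Only the odd part of the support function affects the hedgehog, so $M_{K'}$ is obtained by replacing $m(u)$ with $m(u)+\delta\sin(Nu)\,u+\delta N\cos(Nu)\,u^{\perp}$: a wiggly curve confined to a thin tube about $M_K$. Along an arc of $M_K$ that lies on $\partial\operatorname{conv}(M_K)$ with the hedgehog's own tangent lines supporting the hull, the outward ripple crests get pushed onto a genuinely strictly convex curve and become infinitely many exposed points of $\operatorname{conv}(M_{K'})$, provided $\delta$ is small relative to the curvature there. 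With each $\mathcal C_n$ closed and nowhere dense, $\bigcup_n\mathcal C_n$ is meager, so its complement — the strictly convex $K$ for which $\operatorname{conv}(M_K)$ has infinitely many exposed points — is comeager in the strictly convex bodies, hence comeager in $\mathcal K^2$, and by the characterization every such $K$ has infinitely many convexity points.

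I expect the genuine difficulty to lie in the density statement of the Baire step: the middle hedgehog is a \emph{global} object, so one must control where the rippled hedgehog sits relative to its own convex hull — in particular guarantee, perhaps after a preliminary perturbation, that some non-degenerate arc of $M_K$ lies on $\partial\operatorname{conv}(M_K)$ in a way that survives rippling instead of being absorbed into the interior — and balance the amplitude against the frequency and the local curvature. The characterization lemma is also nontrivial, chiefly in pinning down exactly which notion of exposedness matches the contact-direction condition when $\operatorname{conv}(M_K)$ has corners or edges.
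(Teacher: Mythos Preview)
Your Baire-category skeleton matches the paper exactly: restrict to strictly convex bodies, use continuity of $K\mapsto\operatorname{conv}\mathcal{M}_K$ to get closedness of the sets $\mathcal{A}_k$ of bodies whose middle-hedgehog hull has at most $k$ exposed points, then show each $\mathcal{A}_k$ is nowhere dense. Two points deserve comment.

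\textbf{The characterization.} You aim for a biconditional (convexity points $=$ exposed points of $\operatorname{conv}\mathcal{M}_K$), but only the implication ``exposed $\Rightarrow$ convexity point'' is needed, and that is exactly Lemma~\ref{Lem1}, imported from \cite{Sch16}. Your sketch of the biconditional has a slip: $g(u)=\tfrac12(h(u)-h(-u))$ is not the support function of the \emph{set} $\mathcal{M}_K$ but the hedgehog ``support function'' satisfying $g(u)=\langle m(u),u\rangle$; these differ precisely when the hedgehog is non-convex, which is the generic situation. Since only one direction is required, this is best sidestepped.

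\textbf{The density step.} Here the paper takes a quite different and more hands-on route than your sine-ripple. Given $K\in\mathcal{A}_k$ and $\varepsilon>0$, the paper first approximates $K$ by a polygon $P$ (no parallel edges, no ``long'' edges) with $K\subset\operatorname{int}P\subset K+\varepsilon B^2$. For polygons the middle hedgehog is a finite union of segments whose corners are classified as \emph{weak} or \emph{strong}; Lemma~\ref{Lem4} shows that only strong corners can be vertices of $\operatorname{conv}\mathcal{M}_P$. The paper then performs a local surgery at one vertex $x$ of $\operatorname{conv}\mathcal{M}_P$: it cuts off the two opposite vertices $p,q$ of $P$ whose midpoint is $x$ by two new short edges, with the cut directions chosen so that the single strong corner $x$ is replaced by \emph{two} nearby strong corners $y_\lambda,z_\lambda$ lying on a line parallel to a supporting line of $\operatorname{conv}\mathcal{M}_P$ at $x$. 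A smallness parameter $\lambda$ ensures the other vertices of $\operatorname{conv}\mathcal{M}_P$ persist, so the vertex count strictly increases. Iterating finitely many times yields a polygon $Q$ with more than $k$ vertices on $\operatorname{conv}\mathcal{M}_Q$, and a final rounding of edges by large-radius arcs produces a strictly convex body in $\mathcal{K}^2_*\setminus\mathcal{A}_k$.

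This polygon approach buys exactly what your ripple does not: because $\mathcal{M}_P$ is piecewise linear, one knows \emph{explicitly} which points lie on $\partial\operatorname{conv}\mathcal{M}_P$ and can manufacture new exposed points one at a time by a controlled local move, without ever needing an arc of the hedgehog to sit on the boundary of its hull. The difficulty you correctly flag --- guaranteeing that the perturbed hedgehog produces new exposed points rather than burying the ripples in the interior of the hull --- is thus avoided entirely. Your trigonometric perturbation is a natural idea, but making it rigorous would require precisely the global control you identify as missing; the paper's discrete construction is longer to set up but leaves no such gap.
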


A result from which this one follows will be formulated at the end of the next section, after some preparations.

For surveys on Baire category results in convexity, we refer the reader to Gruber \cite{Gru85, Gru93} and Zamfirescu \cite{Zam91, Zam09}.

\section{The middle hedgehog}\label{sec2}

We work in the Euclidean plane ${\mathbb R}^2$, with scalar product $\langle\cdot\,,\cdot\rangle$, induced norm $\|\cdot\|$ and unit circle ${\mathbb S}^1$. The set of convex bodies (nonempty, compact, convex subsets) in ${\mathbb R}^2$ is denoted by ${\mathcal K}^2$. We use the Hausdorff metric $\delta$, which is defined on all nonempty compact subsets of ${\mathbb R}^2$  (for notions from convex geometry not explained here, we refer to \cite{Sch14}). Let $K\in{\mathcal K}^2$ and $u\in{\mathbb S}^1$. By $H(K,u)$ we denote the supporting line of $K$ with outer unit normal vector $u$, and we call the line
$$ M_K(u):=\frac{1}{2}[H(K,u)+H(K,-u)]$$
the {\em middle line} of $K$ with normal vector $u$ (hence, $M_K(u)=M_K(-u)$). With $F(K,u):= K\cap H(K,u)$, which is the face of $K$ with outer normal vector $u$, we call the convex set
$$ Z_K(u):=\frac{1}{2}[F(K,u)+F(K,-u)]$$
(either a singleton or a segment) the {\em middle set} of $K$ with normal vector $u$. If $F(K,u)$ is one-pointed, we write $F(K,u)=\{x_K(u)\}$, and if also $F(K,-u)$ is one-pointed, then $Z_K(u)=\{m_K(u)\}$ with
$$ m_K(u)= \frac{1}{2}[x_K(u)+x_K(-u)].$$
We call $m_K(u)$ a {\em middle point} of $K$. The set
$$ {\mathcal M}_K:= \bigcup_{u\in{\mathbb S}^1} Z_K(u)$$
is the {\em middle hedgehog} of $K$. It is a closed curve, the locus of all midpoints of affine diameters, that is, chords of $K$ connecting pairs of boundary points lying in distinct parallel support lines. 
 
The following lemma, proved in \cite{Sch16}, was crucial for the proof of Theorem \ref{Thm1}.

\begin{lemma}\label{Lem1}
Suppose that $K\in{\mathcal K}^2$ has no pair of parallel edges. Then each exposed point of the convex hull of the middle hedgehog ${\mathcal M}_K$ is a convexity point of $K$.
\end{lemma}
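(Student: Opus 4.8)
The plan is to recast the statement ``$p$ is a convexity point'' as a condition on the middle sets $Z_K(u)$, and then to exploit exposedness. First I record a characterization of convexity points. Every point of ${\mathcal M}_K$ is the midpoint of an affine diameter, that is, of a genuine chord of $K$ which is not an edge (its endpoints lie on two \emph{distinct} parallel supporting lines), and hence lies in $\operatorname{int} K$; in particular $p\in\operatorname{int} K$, so $K$ and its reflection $2p-K$ overlap in a neighbourhood of $p$. For two planar convex bodies with a common interior point, the union is convex if and only if there is no ``bridge'': no line supporting both bodies from the same side along which their contact faces fail to meet or abut (this is easy to check in the plane, by inspecting the caps of $\operatorname{conv}(K\cup(2p-K))$ not contained in $K\cup(2p-K)$). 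A short support-function computation shows that a direction $u$ carries such a common supporting line of $K$ and $2p-K$ precisely when $h_K(u)=2\langle p,u\rangle+h_K(-u)$, i.e.\ when $p\in M_K(u)$; for such $u$ one has $F(2p-K,u)=2p-F(K,-u)$, both contact faces lie on the line $H(K,u)$, and comparing their positions along that line with $Z_K(u)=\tfrac12[F(K,u)+F(K,-u)]$ one finds that they meet or abut exactly when $p\in Z_K(u)$. Thus
$$p\text{ is a convexity point of }K\iff\bigl(p\in M_K(u)\ \Rightarrow\ p\in Z_K(u)\bigr)\quad\text{for every }u\in\Sn.$$

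Now let $p$ be an exposed point of $C:=\operatorname{conv}{\mathcal M}_K$, and let $g$ be a line through $p$, with outer unit normal $v$, meeting $C$ only in $p$; then $\langle x,v\rangle<\langle p,v\rangle$ for every $x\in C\setminus\{p\}$, and since ${\mathcal M}_K$ is compact and exposed points of a convex hull belong to the set, $p\in{\mathcal M}_K$. Fix $u$ with $p\in M_K(u)$; it remains to prove $p\in Z_K(u)$. The case $v\parallel u$ is immediate: $\langle\cdot,v\rangle$ is then constant on the line $M_K(u)$, which contains the segment $Z_K(u)\subseteq{\mathcal M}_K$, so this segment collapses to $\{p\}$ and $p\in Z_K(u)$.

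So assume $v\not\parallel u$, and argue by contradiction: suppose $p\notin Z_K(u)$. Since $K$ has no pair of parallel edges, one of $F(K,u),F(K,-u)$ is a single point; say $F(K,-u)=\{b\}$. Then $Z_K(u)=\tfrac12 F(K,u)+\tfrac12 b$ is a homothet of the face $F(K,u)$ lying on $M_K(u)$, and $p\notin Z_K(u)$ means that $2p-b$, which one checks lies on $H(K,u)$, lies strictly outside $F(K,u)$, beyond one of its endpoints $a^\ast$; correspondingly $p$ lies on $M_K(u)$ strictly beyond the endpoint $q:=\tfrac12(a^\ast+b)$ of $Z_K(u)$, and (as $\langle\cdot,v\rangle$ is strictly monotone along $M_K(u)$ and attains its maximum over ${\mathcal M}_K$ at $p$) beyond $q$ on the side where $\langle\cdot,v\rangle$ increases. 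Here $a^\ast$ is a vertex of $K$, and one examines the affine diameters of $K$ issuing from $a^\ast$ --- equivalently the arc of ${\mathcal M}_K$ traced by the midpoints $\tfrac12(a^\ast+F(K,-n))$ as the outer normal $n$ sweeps through the normal cone at $a^\ast$, again kept tractable by the no-parallel-edges hypothesis --- in order to show that $p$ cannot be exposed, by locating $p$ in the relative interior of a nondegenerate chord of $C$ (or in the interior of $C$). This contradicts the choice of $g$ and $v$, so $p\in Z_K(u)$, and by the characterization $p$ is a convexity point of $K$.

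The reductions in the first two paragraphs are routine; the main obstacle is the last step --- showing that the configuration $p\in M_K(u)\setminus Z_K(u)$ (with $p\in{\mathcal M}_K$) forces $p$ out of the exposed points of $C$ --- carried out uniformly over the degenerate cases ($b$ a vertex of $K$, $F(K,u)$ a single point, ${\mathcal M}_K$ non-convex or self-intersecting). This is precisely where the no-parallel-edges hypothesis earns its keep: it forces every middle set $Z_K(u)$ to be a homothet of a single face of $K$, which is what makes both the bridge dichotomy and the local geometry of ${\mathcal M}_K$ near $Z_K(u)$ amenable to analysis.
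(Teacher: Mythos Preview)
Your characterization of convexity points is exactly right and matches what the paper needs: $p$ is a convexity point of $K$ if and only if $p\in M_K(u)\Rightarrow p\in Z_K(u)$ for every $u\in\Sn$ (this is precisely implication~(\ref{3.3}) in the paper, recalled there from \cite{Sch16}). So the reduction in your first paragraph is on target.

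Two points, however. First, a minor error: it is not true that ${\mathcal M}_K\subset\operatorname{int}K$. For a triangle the middle hedgehog is the medial triangle, and its vertices are midpoints of edges of $K$, hence boundary points; moreover an edge of $K$ can itself be an affine diameter (for a triangle, every edge is one). This does not break the characterization, but your justification of it via ``$K$ and $2p-K$ overlap in a neighbourhood of $p$'' needs to be rephrased to cover $p\in\partial K$.

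Second, and this is the real issue: the decisive step --- deducing from exposedness that $p\in M_K(u)$ forces $p\in Z_K(u)$ --- is not carried out. You sketch a local, geometric approach (follow the arc of ${\mathcal M}_K$ through the normal cone at the vertex $a^\ast$ and trap $p$ inside a chord of $C$), but you do not execute it, and you yourself flag it as ``the main obstacle.'' The paper's route (from \cite{Sch16}, reproduced in the proof of Lemma~\ref{Lem4}) is quite different and avoids this case analysis entirely: place $p$ at the origin with the exposing line $L$ horizontal, and study the abscissa $f(\varphi)=p(\varphi)/\cos\varphi$ of the point where the middle line $M_K(\ur(\varphi))$ meets $L$. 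One computes $f'(\varphi)=\langle m_K(\ur(\varphi)),e_2\rangle/\cos^2\varphi$, which is nonnegative almost everywhere by exposedness (every $m_K(\ur(\varphi))$ lies weakly above $L$). Hence $f$ is monotone, so $\{\varphi:f(\varphi)=0\}$ is a closed interval $I$; on the interior of $I$ the derivative vanishes, forcing $m_K(\ur(\varphi))=0$, and by continuity $0\in Z_K(\ur(\varphi))$ for all $\varphi\in I$. This one-variable monotonicity argument is what replaces your unfinished geometric step, and it handles all the degenerate cases uniformly.
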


We consider special examples of middle hedgehogs. First, let $K$ be a convex polygon with no pair of parallel edges. For each edge $F(K,u)$ of $K$ we have $F(K,-u)=\{x_K(-u)\}$ and $Z_K(u)= (1/2)[F(K,u)+x_K(-u)]$. (Each middle point belongs to some $Z_K(u)$ with suitable $u$.) The union ${\mathcal M}_K$ of these segments, over all unit normal vectors of the edges, is a closed polygonal curve.

Second, let $K\in{\mathcal K}^2$ be strictly convex. Then the support function of $K$, which we denote by $h(K,\cdot)$, is differentiable on ${\mathbb R}^2\setminus \{0\}$. To obtain a parametrization of ${\mathcal M}_K$, we choose an orthonormal basis $(e_1,e_2)$ of ${\mathbb R}^2$ and write 
$$\ur(\varphi):= (\cos\varphi)e_1+(\sin\varphi)e_2,\quad \varphi\in{\mathbb R};$$
then $(\ur(\varphi),\ur'(\varphi))$ is an orthonormal frame with the same orientation as $(e_1,e_2)$. We define
$$  \x(\varphi) := m_K(\ur(\varphi))$$
and
\begin{equation}\label{2.1} 
p(\varphi) := \frac{1}{2}\left[h_K(\ur(\varphi))-h_K(-\ur(\varphi))\right]
\end{equation}
for $\varphi\in[0,\pi]$. Note that $\x$ is a parametrized closed curve, since $m_K(u)=m_K(-u)$ for $u\in{\mathbb S}^1$. Since $h_K(u)=\langle x_K(u),u\rangle$, we have
\begin{equation}\label{2.2} 
p(\varphi)=\langle \x(\varphi), \ur(\varphi)\rangle.
\end{equation}
Differentiating (\ref{2.2}) and using that $x_K(u)= \nabla h_K(u)$ (where $\nabla$ denotes the gradient; see \cite{Sch14}, Corollary 1.7.3), we obtain
\begin{equation}\label{2.3}  
p'(\varphi) =\langle \x(\varphi),\ur'(\varphi)\rangle.
\end{equation}
The equations equations (\ref{2.2}) and (\ref{2.3}) together yield
$$ \x(\varphi) = p(\varphi) \ur(\varphi) +p'(\varphi)\ur'(\varphi),\quad \varphi\in[0,\pi].$$
This is a convenient parametrization of the middle hedgehog. The intersection point of the middle lines $M_K(\ur(\varphi))$ and $M_K(\ur(\varphi+\varepsilon))$ converges to $m_K(\ur(\varphi))$ for $\varepsilon\to 0$, thus
$\x$ is the envelope of the family of middle lines of $K$, suitably parametrized. We remark that generalized envelopes of more general line families were studied in \cite{HS53}.

We remark further that in the terminology of Martinez--Maure (see \cite{MM95, MM97}, for example, also \cite{MM99, MM06}), the curve $\x$ is a planar `projective hedgehog'. The set $\{\x(\varphi):\varphi\in[0,\pi)\}$ has been introduced and investigated as the `midpoint parallel tangent locus' in \cite{Hol01} and has been named the `area evolute' in \cite{Gib08}; a further study appears in \cite{Cra14}.

According to Lemma \ref{Lem1} and the fact that a typical convex body is strictly convex, Theorem \ref{Thm2} is a consequence of the following result.

\begin{theorem}\label{Thm3}
For a typical convex body in the plane, the convex hull of the middle hedgehog has infinitely many exposed points.\end{theorem}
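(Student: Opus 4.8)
\medskip
The plan is to prove the equivalent statement that for a typical $K$ the convex hull $Q_K:=\operatorname{conv}{\mathcal M}_K$ is not a polygon. This is equivalent to Theorem~\ref{Thm3}, since a convex body has only finitely many exposed points precisely when it is a polygon: a convex body with finitely many extreme points is a polygon, and by Straszewicz's theorem the exposed points are dense among the extreme points. I would work throughout in the Baire space ${\mathcal K}^2_{\rm np}$ of bodies having no pair of parallel edges. This set is a dense $G_\delta$ in ${\mathcal K}^2$ (it is $G_\delta$ because for each $\ell>0$ the set of bodies possessing a pair of parallel edges of length $\ge\ell$ is closed, and dense because it contains all smooth bodies), so a set comeager in ${\mathcal K}^2_{\rm np}$ is comeager in ${\mathcal K}^2$. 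The point of this restriction is that, while $K\mapsto{\mathcal M}_K$ is only upper semicontinuous on all of ${\mathcal K}^2$ (this much follows at once from continuity of support functions and of support sets), it is genuinely continuous at every $K\in{\mathcal K}^2_{\rm np}$. For the missing lower semicontinuity one argues: given $m=\tfrac12(x+y)$ with $x\in F(K,u)$, $y\in F(K,-u)$, after interchanging $u$ with $-u$ one may assume $F(K,-u)=\{y\}$ is a singleton; applying the continuity of the metric projection at the points $x+tu$ $(t>0)$ gives, for $K_j\to K$, points $x_j\in\partial K_j$ with unit normals $u_j\in N(K_j,x_j)$ such that $x_j\to x$, $u_j\to u$; then $F(K_j,-u_j)\to\{y\}$, so $\tfrac12(x_j+y_j)\in{\mathcal M}_{K_j}$ converges to $m$ for any $y_j\in F(K_j,-u_j)$. (The hypothesis cannot be dropped: for the square, ${\mathcal M}_K$ is the union of the two midlines, whereas the middle hedgehogs of smooth bodies near the square shrink to the centre.)

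On ${\mathcal K}^2_{\rm np}$ the map $K\mapsto Q_K$ is therefore continuous, so for each $m\in{\mathbb N}$ the set ${\mathcal P}_m:=\{K\in{\mathcal K}^2_{\rm np}:Q_K\text{ is a polygon with at most }m\text{ vertices}\}$ is closed (a Hausdorff limit of convex polygons with at most $m$ vertices is again such a polygon). Since $\bigcup_m{\mathcal P}_m$ is exactly the set of $K\in{\mathcal K}^2_{\rm np}$ for which $Q_K$ is a polygon, the theorem reduces to showing that each ${\mathcal P}_m$ has empty interior; then $\bigcup_m{\mathcal P}_m$ is meager in ${\mathcal K}^2_{\rm np}$, its complement there is comeager, and hence comeager in ${\mathcal K}^2$.

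The heart of the matter is thus a density statement: arbitrarily close to any $K_0\in{\mathcal K}^2$ there is a body $K\in{\mathcal K}^2_{\rm np}$ whose $Q_K$ has infinitely many exposed points. I would construct $K$ as follows. Approximate $K_0$ by a convex polygon $P$ with no two parallel edges. Every extreme point of $Q_P=\operatorname{conv}{\mathcal M}_P$ lies in ${\mathcal M}_P$ and hence is an endpoint of some segment $Z_P(u)$, so it has the form $q=\tfrac12(A+B)$ where $A,B$ are vertices of $P$ admitting parallel supporting lines. Fix such a point $q$ and put ${\mathcal C}:=N(P,A)\cap(-N(P,B))$, a nonempty convex cone; after an arbitrarily small perturbation of $P$ we may assume $\operatorname{int}{\mathcal C}\ne\emptyset$. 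Now replace the vertices $A$ and $B$ of $P$ by small circular arcs of radii $\rho_A,\rho_B$ tangent to the adjacent edges, obtaining $K\in{\mathcal K}^2_{\rm np}$ with $\delta(K,P)=O(\max(\rho_A,\rho_B))$. For $u\in\operatorname{int}{\mathcal C}$ the support sets of $K$ in the directions $u$ and $-u$ are the single points $c_A+\rho_A u$ and $c_B-\rho_B u$ on the two arcs ($c_A,c_B$ their centres), so $Z_K(u)=\tfrac12(c_A+c_B)+\tfrac{\rho_A-\rho_B}{2}\,u$; as $u$ runs over $\operatorname{int}{\mathcal C}$ this traces a circular subarc $\gamma$ of ${\mathcal M}_K$ of radius $\tfrac12|\rho_A-\rho_B|$, centred near $q$ and bulging in the directions $\operatorname{sgn}(\rho_A-\rho_B)\,{\mathcal C}$. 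The tangent cone $T(Q_P,q)$ of the polygon $Q_P$ at its vertex $q$ spans strictly less than $\pi$, hence cannot contain both ${\mathcal C}$ and $-{\mathcal C}$; choosing the sign of $\rho_A-\rho_B$ accordingly, $\gamma$ bulges in a direction exterior to $Q_P$ at $q$. Since $Q_K$ can be made as Hausdorff-close to $Q_P$ as desired, for $\rho_A,\rho_B$ small enough $\gamma$ lies on $\partial Q_K$ and consists entirely of exposed points of $Q_K$. Thus $Q_K$ has infinitely many exposed points, so $K\notin{\mathcal P}_m$; as $K$ can be taken arbitrarily close to any body, each ${\mathcal P}_m$ has empty interior, completing the proof.

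I expect the two geometric lemmas to be where the real work lies. In the continuity assertion the delicate point is the coupling between the two opposite support faces, which is exactly why parallel edges must be excluded. In the construction the subtle step is the quantitative claim that the new arc $\gamma$ genuinely reaches $\partial Q_K$ and is not absorbed into the convex hull of the rest of ${\mathcal M}_K$: one has to control, uniformly in the small parameters $\rho_A,\rho_B$, how far ${\mathcal M}_K$ protrudes beyond $Q_P$ near $q$ against how far the remainder of ${\mathcal M}_K$ stays away from $q$. The remaining ingredients — the reduction via Straszewicz, the transfer of comeagerness from ${\mathcal K}^2_{\rm np}$ to ${\mathcal K}^2$, and the polygonal approximation — are routine.
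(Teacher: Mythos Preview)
Your overall architecture matches the paper's: decompose into sets ${\mathcal P}_m$ (the paper's ${\mathcal A}_k$), prove each is closed via continuity of $K\mapsto{\mathcal M}_K$ on a suitable dense $G_\delta$ subspace, then show each has empty interior by a perturbation argument. Working in ${\mathcal K}^2_{\rm np}$ rather than the strictly convex bodies ${\mathcal K}^2_*$ is a harmless variant, and your continuity argument is essentially the paper's Lemma~\ref{Lem2}. The genuine difference is the density step: the paper approximates by a polygon and iteratively \emph{cuts off} a pair of opposite vertices so as to split one vertex of $\operatorname{conv}{\mathcal M}_P$ into two, repeating until the count exceeds $k$, then smooths; you try to jump directly to infinitely many exposed points by rounding two opposite vertices with unequal radii.

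Unfortunately that shortcut has a real gap, and your own tangent-cone argument does not close it. You argue that since $T(Q_P,q)$ has opening $<\pi$ it cannot contain both ${\mathcal C}$ and $-{\mathcal C}$, so one can choose the sign of $\rho_A-\rho_B$ to make $\gamma$ bulge ``outward''. But for an interior point $c+ru_0$ of the arc to be an exposed point of $Q_K$, its outward normal $u_0$ must lie in (a perturbation of) the \emph{normal} cone $N(Q_P,q)$, not merely in the complement of the tangent cone; these differ whenever $q$ is a genuine corner. Concretely, take $P$ an equilateral triangle with vertices $V_1,V_2,V_3$ and $q=\tfrac12(V_2+V_3)$. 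One computes ${\mathcal C}=N(P,V_2)\cap(-N(P,V_3))$ to be the $60^\circ$ cone about the direction $V_2-V_3$, while $N(Q_P,q)$ is the $120^\circ$ cone about $-V_1$; these meet only along their common boundary ray, and the same holds for $-{\mathcal C}$. After rounding, the arc $\gamma$ is tangent at each endpoint to the adjacent (shortened) middle segment and curves to the \emph{interior} side of it, so $\gamma$ lies entirely in the convex hull of those two segments and contributes no exposed points to $Q_K$, whichever sign of $\rho_A-\rho_B$ you pick. In fact this boundary-ray coincidence is not accidental: the two edges of $Q_P$ at $q$ are translates of halves of $E_i$ and $E_{i+1}$, so their normals are $\pm\ur(\varphi_i),\pm\ur(\varphi_{i+1})$, exactly the boundary rays of ${\mathcal C}$. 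Thus $\operatorname{int}{\mathcal C}$ and $N(Q_P,q)$ are \emph{always} interior-disjoint.

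The paper's construction avoids this by not relying on curvature at all: it cuts the two opposite vertices by short segments whose directions are chosen so that the two new strong corners of ${\mathcal M}_{P_\lambda}$ near $q$ differ by a vector \emph{parallel to a fixed exposing line} $S$ of $Q_P$ at $q$ (this is the content of (\ref{3.4}) and the surrounding choices of $t_1,t_2,s_1,s_2$), which forces both to be vertices of the new convex hull. This uses, via Lemma~\ref{Lem4}, that every vertex of $Q_P$ is a \emph{strong} corner, giving the needed control on which side of the line $AB$ the adjacent middle segments lie. Your argument would need an analogous structural input; as written, the ``subtle step'' you flag is not merely quantitative but fails outright.
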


\section{Proof of Theorem \ref{Thm3}}\label{sec3}

By ${\mathcal K}^2_*$ we denote the set of strictly convex convex bodies in ${\mathcal K}^2$. The set ${\mathcal K}^2_*$ is a dense $G_\delta$ set in ${\mathcal K}^2$ and hence is also a Baire space. Every set that is comeager in ${\mathcal K}^2_*$ is also comeager in ${\mathcal K}^2$. 

To begin with the proof of Theorem \ref{Thm3}, we set
$$ {\mathcal A}:= \{K\in{\mathcal K}^2_*: {\rm conv}{\mathcal M}_K \mbox{ has only finitely many exposed points}\}$$
and, for $k\in {\mathbb N}$, 
$$ {\mathcal A}_k:= \{K\in{\mathcal K}^2_*: {\rm conv}{\mathcal M}_K \mbox{ has at most $k$ exposed points}\}.$$
We shall prove the following facts.

\begin{lemma}\label{Lem2}
Each set ${\mathcal A}_k$ is closed in ${\mathcal K}^2_*$.
\end{lemma}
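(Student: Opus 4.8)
The plan is to prove sequential closedness of ${\mathcal A}_k$ in ${\mathcal K}^2_*$: given $K_j\to K$ in ${\mathcal K}^2_*$ with $K_j\in{\mathcal A}_k$ for all $j$, I would show $K\in{\mathcal A}_k$. The first observation is that membership in ${\mathcal A}_k$ is quite rigid. By Straszewicz's theorem a compact convex set in the plane is the closed convex hull of its exposed points, and the convex hull of a finite set is closed; hence if ${\rm conv}\,{\mathcal M}_{K_j}$ has at most $k$ exposed points it is a (possibly degenerate) convex polygon $P_j$ whose vertices are precisely its exposed points, so $P_j$ has at most $k$ vertices. Conversely, if ${\rm conv}\,{\mathcal M}_K$ turns out to be a polygon with at most $k$ vertices, then $K\in{\mathcal A}_k$. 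Thus the statement reduces to two points: (i) ${\rm conv}\,{\mathcal M}_{K_j}\to{\rm conv}\,{\mathcal M}_K$ in the Hausdorff metric, and (ii) the class of convex polygons with at most $k$ vertices is closed under Hausdorff limits. Part (ii) is routine: writing $P_j={\rm conv}\{v^j_1,\dots,v^j_k\}$ (repeating vertices if there are fewer than $k$), the vertices lie in the uniformly bounded sets $K_j\supseteq{\rm conv}\,{\mathcal M}_{K_j}$; passing to a subsequence along which $v^j_i\to v_i$ for each $i$, one has ${\rm conv}\{v^j_1,\dots,v^j_k\}\to{\rm conv}\{v_1,\dots,v_k\}$ (the Hausdorff distance is bounded by $\max_i\|v^j_i-v_i\|$), so the limit is again a polygon with at most $k$ vertices.

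The main work is part (i). Since $C\mapsto{\rm conv}\,C$ is continuous on compact sets with respect to the Hausdorff metric, it suffices to prove ${\mathcal M}_{K_j}\to{\mathcal M}_K$; and since all the bodies are strictly convex, their middle hedgehogs are the images of the continuous maps $u\mapsto m_{K_j}(u)$ and $u\mapsto m_K(u)$ on ${\mathbb S}^1$, so it is enough to show $x_{K_j}\to x_K$ uniformly on ${\mathbb S}^1$ --- uniform convergence of $m_{K_j}$ to $m_K$ then follows from $m_K(u)=\frac{1}{2}[x_K(u)+x_K(-u)]$. I would establish the uniform convergence of $x_{K_j}$ via support functions. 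First comes \emph{continuous convergence}: if $u_j\to u$ in ${\mathbb S}^1$ and $x_{K_j}(u_j)\to y$ along a subsequence (subsequential limits exist since the points lie in the uniformly bounded $K_j$), then $\langle x_{K_j}(u_j),v\rangle\le h_{K_j}(v)\to h_K(v)$ for every $v$ gives $y\in K$, while $\langle x_{K_j}(u_j),u_j\rangle=h_{K_j}(u_j)\to h_K(u)$ gives $\langle y,u\rangle=h_K(u)$; since $K$ is strictly convex, $F(K,u)=\{x_K(u)\}$, so $y=x_K(u)$, and as this is the only subsequential limit, $x_{K_j}(u_j)\to x_K(u)$ whenever $u_j\to u$. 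Together with the continuity of $x_K$ on ${\mathbb S}^1$ (valid because $h_K$ is differentiable, hence $C^1$, on ${\mathbb R}^2\setminus\{0\}$ for strictly convex $K$) and the compactness of ${\mathbb S}^1$, this continuous convergence upgrades to uniform convergence by the standard subsequence argument.

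Combining (i) and (ii): $P_j={\rm conv}\,{\mathcal M}_{K_j}$ converges to ${\rm conv}\,{\mathcal M}_K$, the $P_j$ are polygons with at most $k$ vertices, and this class is Hausdorff-closed, so ${\rm conv}\,{\mathcal M}_K$ is a polygon with at most $k$ vertices, i.e.\ has at most $k$ exposed points, whence $K\in{\mathcal A}_k$. I expect the only genuinely delicate point to be the uniform convergence $x_{K_j}\to x_K$; everything else is soft. One caveat worth recording is that the sets ${\rm conv}\,{\mathcal M}_{K_j}$ need not be strictly convex, so one should not attempt to follow individual exposed points of ${\rm conv}\,{\mathcal M}_{K_j}$ to the limit --- it is exactly the reformulation via Straszewicz's theorem (``finitely many exposed points'' $\Leftrightarrow$ ``polygon with few vertices'') that makes the passage to the limit straightforward.
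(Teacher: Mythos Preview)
Your proof is correct and follows essentially the same route as the paper: establish continuity of $K\mapsto{\mathcal M}_K$ on ${\mathcal K}^2_*$ via convergence of the support points $x_{K_j}(u)$ (using strict convexity of the limit to pin down the unique point of $F(K,u)$), then pass to ${\rm conv}\,{\mathcal M}_K$ by continuity of the convex-hull map, and conclude using closedness of the class of polygons with at most $k$ vertices. The only cosmetic difference is that the paper verifies Hausdorff convergence of ${\mathcal M}_{K_j}$ directly via the two Kuratowski-type conditions (each point of ${\mathcal M}_K$ is a limit of points of ${\mathcal M}_{K_j}$, and subsequential limits of points of ${\mathcal M}_{K_j}$ lie in ${\mathcal M}_K$), whereas you go through uniform convergence of the parametrizations $m_{K_j}\to m_K$; the key step---$u_j\to u$ implies $x_{K_j}(u_j)\to x_K(u)$ because $K$ is strictly convex---is the same in both.
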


\begin{lemma}\label{Lem3}
Each set ${\mathcal A}_k$ is nowhere dense in ${\mathcal K}^2_*$.
\end{lemma}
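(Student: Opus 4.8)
The plan is to show that for any $K\in{\mathcal K}^2_*$ and any $\varepsilon>0$, there is a body $K'$ with $\delta(K,K')<\varepsilon$ such that $K'$ has a neighborhood disjoint from ${\mathcal A}_k$; together with Lemma \ref{Lem2} this gives that ${\mathcal A}_k$ is nowhere dense. Since ${\mathcal A}_k$ is closed, it suffices to produce, arbitrarily close to $K$, a body whose middle hedgehog has more than $k$ exposed points on its convex hull, and then to note that Lemma \ref{Lem2} (applied with $k$ replaced by $k$) already tells us ${\mathcal A}_k$ is closed, so the complement is open; hence it is enough to find such a $K'$ \emph{in} the complement, and openness does the rest. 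So the whole burden is: given $K$ strictly convex and $\varepsilon>0$, construct a strictly convex $K'$ within $\varepsilon$ of $K$ whose middle hedgehog ${\mathcal M}_{K'}$ has convex hull with at least $k+1$ exposed points.

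First I would reduce to a convenient model case. Using the parametrization $\x(\varphi)=p(\varphi)\ur(\varphi)+p'(\varphi)\ur'(\varphi)$ derived in Section \ref{sec2}, the middle hedgehog of a strictly convex body is governed by the single $\pi$-periodic function $p$ of \eqref{2.1}, which is (up to the odd part being irrelevant) essentially an arbitrary sufficiently smooth function; more precisely, one can perturb $K$ by adding a small smooth ``wiggle'' to its support function, which changes $h_K(u)-h_K(-u)$, hence $p$, by a small smooth $\pi$-antiperiodic perturbation while keeping $K'$ strictly convex (one must only check the perturbation is small enough that $h(K',\cdot)$ remains a support function of a strictly convex body, i.e. the curvature stays positive, which is routine for $C^2$-small perturbations). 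So the task becomes: choose the perturbation so that the resulting curve $\x(\varphi)=p(\varphi)\ur(\varphi)+p'(\varphi)\ur'(\varphi)$ has convex hull with more than $k$ exposed points.

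The concrete choice I would make is a highly oscillatory perturbation: take $p_\varepsilon(\varphi)=p(\varphi)+\eta\,\psi(\varphi)$ where $\psi$ has, say, period $2\pi/n$ for large $n$ (compatible with $\pi$-antiperiodicity, so $n$ chosen suitably) and $\eta$ is small. A short computation shows that $\x_\varepsilon$ then traces a curve that makes $n$ small loops or bulges near the original curve $\x$, and by choosing the phase and amplitude so that these bulges poke outward past the convex hull of the unperturbed hedgehog, each bulge contributes at least one exposed point of ${\rm conv}\,{\mathcal M}_{K'}$; taking $n>k$ finishes the construction. An alternative, perhaps cleaner, route is to start from a polygonal example: by the first example in Section \ref{sec2}, a convex polygon $P$ with no pair of parallel edges and with many edges has a middle hedgehog that is a closed polygonal curve whose convex hull is easily arranged to have more than $k$ exposed vertices; one then smooths $P$ slightly to land in ${\mathcal K}^2_*$ while only slightly moving ${\mathcal M}_P$, so that the $k+1$ exposed points persist as exposed points of the smoothed body (exposed points being stable under small perturbations once they are ``strictly'' exposed, which one arranges). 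Either way, the final step invokes Lemma \ref{Lem2}: the complement of ${\mathcal A}_k$ is open, and we have exhibited a point of it arbitrarily close to an arbitrary $K\in{\mathcal K}^2_*$, so ${\mathcal A}_k$ has empty interior; being closed, it is nowhere dense.

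The main obstacle I anticipate is not the existence of bodies with many exposed hedgehog points — that is easy from either the polygonal picture or an oscillatory support-function perturbation — but rather the \emph{stability} bookkeeping: making sure that the $k+1$ exposed points one creates are genuinely exposed (not merely extreme) and survive the passage to a strictly convex approximant, and simultaneously controlling that the perturbation keeps the body strictly convex and within $\varepsilon$ in the Hausdorff metric. Concretely, one wants each constructed exposed point to be the unique maximizer of some linear functional over ${\rm conv}\,{\mathcal M}_{K'}$ with a quantitative gap, so that the property is open; arranging this gap uniformly for all $k+1$ points while keeping the perturbation $C^2$-small is the delicate part of the argument.
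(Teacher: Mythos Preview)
Your overall framework is correct and matches the paper: use closedness of ${\mathcal A}_k$ (Lemma~\ref{Lem2}) and produce, arbitrarily close to any given $K\in{\mathcal K}^2_*$, a strictly convex body whose middle hedgehog has more than $k$ exposed points on its convex hull. The gap lies precisely in the step you treat as routine. For the oscillatory approach, note first that $K$ is merely strictly convex, not $C^2$, so ``$C^2$-small'' is not immediately meaningful; but more seriously, if $K\in{\mathcal A}_k$ then ${\rm conv}\,{\mathcal M}_K$ is a polygon with at most $k$ vertices, and for all but at most $k$ values of $\varphi$ the point $\x(\varphi)$ lies \emph{strictly} in the interior of that polygon. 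A perturbation of amplitude $\eta$ in $p$ moves $\x(\varphi)$ by $O(\eta n)$, while strict convexity forces $\eta n^2$ to be bounded; so the bulges shrink like $1/n$ and cannot ``poke past'' the convex hull except near the at most $k$ existing exposed points. You therefore do not get $n$ new exposed points, only $O(1)$ new ones near each old vertex, and it is not clear even that you get one. For the polygonal approach, the phrase ``easily arranged'' hides the entire difficulty: a polygon approximating $K\in{\mathcal A}_k$ will typically have ${\rm conv}\,{\mathcal M}_P$ with about the same (small) number of vertices, and it is not at all obvious how to increase this number while staying close to $K$.

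The paper follows your second route but does real work at this step. It first proves an auxiliary fact (Lemma~\ref{Lem4}): for a polygon $P$ without parallel edges, the vertices of ${\rm conv}\,{\mathcal M}_P$ are always \emph{strong} corners of ${\mathcal M}_P$, meaning midpoints $(p+q)/2$ where $p,q$ are opposite vertices of $P$ whose adjacent edges with nearby normal directions are \emph{not} adjacent to each other. Then, given one such vertex $x=(p+q)/2$, the paper cuts off both $p$ and $q$ by short new edges whose directions are chosen with care (so that the two new opposite-vertex pairs produce two strong corners $y_\lambda,z_\lambda$ lying on a line parallel to a supporting line of ${\rm conv}\,{\mathcal M}_P$ at $x$), thereby replacing the single vertex $x$ by two vertices of the new convex hull. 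Iterating this raises the vertex count above $k$, and a final rounding of the polygon by large circular arcs lands back in ${\mathcal K}^2_*$. So the main obstacle is not the stability bookkeeping you flag at the end, but the construction itself: manufacturing even one additional exposed point near a given $K$ requires a controlled local surgery, not a generic wiggle.
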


When this has been proved, then we know that the set ${\mathcal A}= \bigcup_{k\in{\mathbb N}} {\mathcal A}_k$ is meager. Hence its complement, which is the set of all $K\in{\mathcal K}^2_*$ for which ${\rm conv}{\mathcal M}_K$ has infinitely many exposed points, is comeager in ${\mathcal K}^2_*$ and hence in ${\mathcal K}^2$. This is the assertion of Theorem \ref{Thm3}.

\vspace{2mm}

\noindent{\em Proof of Lemma} \ref{Lem2}. First we show that on ${\mathcal K}^2_*$, the mapping $K\mapsto{\mathcal M}_K$ is continuous (this would not be true if ${\mathcal K}^2_*$ were replaced by ${\mathcal K}^2$).

Let $(K_i)_{i\in{\mathbb N}}$ be a sequence in ${\mathcal K}^2_*$ converging to some $K\in {\mathcal K}^2_*$. To show that ${\mathcal M}_{K_i}\to {\mathcal M}_K$ in the Hausdorff metric for $i\to\infty$, we use Theorem 1.8.8 of \cite{Sch14} (it is formulated for convex bodies, but as its proof shows, it holds for connected compact sets---or see Theorems 12.2.2 and 12.3.4 in \cite{SW08}). 

Let $x\in{\mathcal M}_{K}$. Then there is a vector $u\in{\mathbb S}^1$ with $x=(1/2)[x_K(u)+x_K(-u)]$. The sequence $(x_{K_i}(u))_{i\in{\mathbb N}}$ has a convergent subseqence, and its limit is a boundary point of $K$ with outer normal vector $u$, hence equal to $x_K(u)$. Since this holds for every convergent subsequence, the sequence $(x_{K_i}(u))_{i\in{\mathbb N}}$ itself converges to $x_K(u)$. Similarly, the sequence $(x_{K_i}(-u))_{i\in{\mathbb N}}$ converges to $x_K(-u)$. It follows that $m_{K_i}(u) = (1/2)[x_{K_i}(u)+x_{K_i}(-u)]\to (1/2)[x_{K}(u)+x_{K}(-u)]=x$ for $i\to\infty$, and here $m_{K_i}(u)\in {\mathcal M}_{K_i}$. Thus, each point in ${\mathcal M}_K$ is the limit of a sequence $(m_i)_{\in{\mathbb N}}$ with $m_i\in {\mathcal M}_{K_i}$ for $i\in{\mathbb N}$.

Let $x_{i(j)}\in{\mathcal M}_{K_{i(j)}}$ for a subsequence $(i(j))_{j\in{\mathbb N}}$, and suppose that $x_{i(j)}\to x$ for $j\to\infty$. Then $x_{i(j)}=(1/2)[x_{K_{i(j)}}(u_j)+x_{K_{i(j)}}(-u_j)]$ for suitable $u_j\in{\mathbb S}^1$ ($j\in{\mathbb N}$). There is a convergent subsequence of $(u_j)_{j\in{\mathbb N}}$, and we can assume that this is the sequence $(u_j)_{j\in{\mathbb N}}$ itself, say $u_j\to u$ for $j\to\infty$. Then $x_{K_{i(j)}}(u_j)\to x_K(u)$ and $x_{K_{i(j)}}(-u_j)\to x_K(-u)$, hence $x_{i(j)}\to (1/2)[x_K(u)+x_K(-u)]\in {\mathcal M}_K$. It follows that $x\in{\mathcal M}_K$. This completes the continuity proof for the mapping $K\mapsto{\mathcal M}_K$.

To show that ${\mathcal A}_k$ is closed in ${\mathcal K}^2_*$, let $(K_i)_{i\in{\mathbb N}}$ be a sequence in ${\mathcal A}_k$ converging to some $K\in {\mathcal K}^2_*$. As just shown, we have ${\mathcal M}_{K_i}\to {\mathcal M}_K$ and hence also ${\rm conv}{\mathcal M}_{K_i}\to {\rm conv}{\mathcal M}_K$ for $i\to\infty$, since the convex hull mapping is continuous (even Lipschitz, see \cite{Sch14}, p. 64). Since each ${\rm conv}{\mathcal M}_{K_i}$ is a convex polygon with at most $k$ vertices, also ${\rm conv}{\mathcal M}_{K}$ is a convex polygon with at most $k$ vertices, thus $K\in {\mathcal A}_k$. This completes the proof of Lemma \ref{Lem2}. \qed

To prepare the proof of Lemma \ref{Lem3}, we need to have a closer look at the middle hedgehog ${\mathcal M}_P$ of a convex polygon $P$. We assume in the following that $P$ has interior points and has no pair of parallel edges.

First, the unoriented normal directions of the edges of $P$ have a natural cyclic order. We may assume, without loss of generality, that no edge of $P$ is parallel to the basis vector $e_1$. Then there are angles $-\pi/2 < \varphi_1 < \varphi_2<\dots< \varphi_k< \pi/2$ such that, for each $i \in \{1,\dots,k\}$, either $\ur(\varphi_i)$ or $-\ur(\varphi_i)$ is an outer normal vector of an edge of $P$ (not both, since $P$ does not have a pair of parallel edges), and all unit normal vectors of the edges of $P$ are obtained in this way. We denote by $E_i$ the edge of $P$ that is orthogonal to $\ur(\varphi_i)$. We call the pair $(E_i, E_{i+1})$ {\em consecutive} (where $E_{k+1} := E_1$; this convention is also followed below), and in addition we call it {\em adjacent} if $E_i\cap E_{i+1}$ is a vertex of $P$. For an angle $\psi\in[-\pi/2,\pi/2)$ we say that $\psi$ is {\em between} $\varphi_i$ and $\varphi_{i+1}$ if either $i\in \{1,\dots,k-1\}$ and $\varphi_i<\psi<\varphi_{i+1}$, or $i=k$ and either $-\pi/2 <\psi<\varphi_1$ or $\varphi_k<\psi\le \pi/2$. Let $(E_i, E_{i+1})$ be a consecutive pair. The following facts, to be used below, follow immediately from the definitions. If $\psi$ is between $\varphi_i$ and $\varphi_{i+1}$, then $\ur(\psi)$ is not a normal vector of an edge of $P$. Suppose that, say, $\ur(\varphi_i)$ is the outer normal vector of $E_i$. If $(E_i,E_{i+1})$ is adjacent, then $\ur(\varphi_{i+1})$ is the outer normal vector of $E_{i+1}$. If $(E_i,E_{i+1})$ is not adjacent, then $\ur(\varphi_{i+1})$ is the inner normal vector of $E_{i+1}$.  These definitions of $E_i$ and $\varphi_i$ will be used in the rest of this note.

Now let $p$ and $q$ be {\em opposite} vertices of $P$, that is, vertices with $H(P,\ur(\psi))\cap P=\{p\}$ and $H(P,-\ur(\psi))\cap P=\{q\}$ for some $\psi$. After interchanging $p$ and $q$, if necessary, we can assume that $\psi\in[-\pi/2,\pi/2)$. Then there is a unique index $i\in\{1,\dots,k\}$ such that $\psi$ is between $\varphi_i$ and $\varphi_{i+1}$. The middle sets $Z_P(\ur(\varphi_i))$ and $Z_P(\ur(\varphi_{i+1}))$ have the midpoint $x=(p+q)/2$ in common. We say that $x$ is a {\em weak corner} of the middle hedgehog ${\mathcal M}_P$ if the pair $(E_i,E_{i+1})$ is adjacent, and $x$ is a {\em strong corner} of ${\mathcal M}_P$ if $(E_i,E_{i+1})$ is not adjacent. If $x$ is a weak corner, then the middle sets $Z_P(\ur(\varphi_i))$ and $Z_P(\ur(\varphi_{i+1}))$ lie on different sides of the line through $p$ and $q$, and if $x$ is a strong corner, then $Z_P(\ur(\varphi_i))$ and $Z_P(\ur(\varphi_{i+1}))$ lie on the same side of this line.

\vspace{-1.2cm}

\begin{center}
\resizebox{16cm}{!}{
\setlength{\unitlength}{1cm}
\begin{pspicture}(0,0)(14,11)

\psline[linewidth=1pt]{-}(6.8,0.5)(2.54,1.4) 
\psline[linewidth=1pt]{-}(2.54,1.4)(1.04,4.62) 
\psline[linewidth=1pt]{-}(1.04,4.62)(1.8,7.4) 
\psline[linewidth=1pt]{-}(1.8,7.4)(8.24,10) 
\psline[linewidth=1pt]{-}(8.24,10)(12.9,6.6) 
\psline[linewidth=1pt]{-}(12.9,6.6)(12.7,4.3) 
\psline[linewidth=1pt]{-}(12.7,4.3)(10.66,1.24) 
\psline[linewidth=1pt]{-}(10.66,1.24)(6.8,0.5) 

\psline[linewidth=0.4pt](6.8,0.5)(8.24,10)
\psline[linewidth=0.4pt](8.24,10)(2.54,1.4)
\psline[linewidth=0.4pt](8.24,10)(10.66,1.24)
\psline[linewidth=0.4pt](1.8,7.4)(12.7,4.3)
\psline[linewidth=0.4pt](1.8,7.4)(10.66,1.24)
\psline[linewidth=0.4pt](12.9,6.6)(1.04,4.62)
\psline[linewidth=0.4pt](12.9,6.6)(2.54,1.4)
\psline[linewidth=0.4pt](1.04,4.62)(12.7,4.3)

\psline{-}(5.39,5.7)(7.72,4)
\psline{-}(7.72,4)(6.97,5.61)
\psline{-}(6.97,5.61)(6.87,4.46)
\psline{-}(6.87,4.46)(7.25,5.83)
\psline{-}(7.25,5.83)(6.23,4.32)
\psline{-}(6.23,4.32)(9.45,5.62)
\psline{-}(9.45,5.62)(7.52,5.25)
\psline{-}(7.52,5.25)(5.39,5.7)

\rput(8.8,0.5){$E_1$}
\rput(4.6,8.95){$E_2$}
\rput(12,2.7){$E_3$}
\rput(1.05,6.1){$E_4$}
\rput(13.1,5.5){$E_5$}
\rput(1.5,2.9){$E_6$}
\rput(10.7,8.6){$E_7$}
\rput(4.4,0.65){$E_8$}

\end{pspicture}
}
\end{center}

\vspace{-2mm}

\noindent Figure 1: The middle hedgehog has one weak corner and seven strong corners, five of which are vertices of the convex hull.

\vspace{5mm}

\begin{lemma}\label{Lem4}
A weak corner of the middle hedgehog ${\mathcal M}_P$ is not a vertex of ${\rm conv}{\mathcal M}_P$.
\end{lemma}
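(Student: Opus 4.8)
The plan is to show that near a weak corner $x=(p+q)/2$, the middle hedgehog ${\mathcal M}_P$ locally lies on \emph{both} sides of every line through $x$, so that $x$ cannot be an extreme point (let alone a vertex) of ${\rm conv}\,{\mathcal M}_P$. Recall that a weak corner arises from an adjacent consecutive pair $(E_i,E_{i+1})$, meaning $v:=E_i\cap E_{i+1}$ is an actual vertex of $P$, and $p,q$ are opposite vertices with $\psi$ between $\varphi_i$ and $\varphi_{i+1}$. Since the pair is adjacent, by the facts recorded in the excerpt, if $\ur(\varphi_i)$ is the outer normal of $E_i$ then $\ur(\varphi_{i+1})$ is the outer normal of $E_{i+1}$; thus the angle of the outer normal sweeps continuously from $\varphi_i$ to $\varphi_{i+1}$ as we turn around the vertex $v$, and throughout this sweep the opposite face is the single vertex, say $q$ (one of $p,q$ is $v$ itself — say $p=v$ — and the other, $q$, is the opposite vertex that stays fixed while the support normal rotates around $p=v$).

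First I would parametrize the relevant arc of ${\mathcal M}_P$ explicitly. For $\psi$ strictly between $\varphi_i$ and $\varphi_{i+1}$, the support line $H(P,\ur(\psi))$ touches $P$ only at $v$, and $H(P,-\ur(\psi))$ touches only at $q$, so $m_P(\ur(\psi))=(v+q)/2=x$ is constant on the open interval $(\varphi_i,\varphi_{i+1})$ — i.e. ${\mathcal M}_P$ has a stationary point there. At the endpoint $\psi=\varphi_i$, the face $F(P,\ur(\varphi_i))=E_i$ is a segment, and $Z_P(\ur(\varphi_i))=\tfrac12[E_i+q]$ is a segment emanating from $x$; similarly $Z_P(\ur(\varphi_{i+1}))=\tfrac12[E_{i+1}+q]$ is a segment emanating from $x$. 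These two segments are translates (scaled by $1/2$) of the two edges $E_i,E_{i+1}$ of $P$ meeting at the convex vertex $v=p$. The key geometric observation is that, because $E_i$ and $E_{i+1}$ are consecutive edges of the \emph{convex} polygon $P$ sharing the vertex $v$, the directions of these two segments, viewed as directions leaving $x$, point into two \emph{opposite} half-planes relative to the line $\ell$ through $p$ and $q$: one segment goes to one side of $\ell$, the other to the other side. This is exactly the statement quoted in the excerpt that for a weak corner the two middle sets lie on different sides of the line through $p$ and $q$.

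From here the conclusion is quick. Take any line $g$ through $x$. I want to exhibit points of ${\mathcal M}_P$ strictly on both sides of $g$ arbitrarily close to $x$ (or all of ${\mathcal M}_P$ in $g$, which cannot happen since $P$ has interior points). If $g\neq\ell$, then since $Z_P(\ur(\varphi_i))$ and $Z_P(\ur(\varphi_{i+1}))$ leave $x$ into opposite open half-planes bounded by $\ell$, and $g$ crosses $\ell$ transversally at $x$, at least one of these two segments has points strictly on each side of $g$ near $x$ — actually one segment lies (near $x$) in one open side of $g$ and the other in the opposite open side, unless one of them happens to be contained in $g$; but the two segments have independent directions (the edges of $P$ at $v$ are non-parallel, indeed they are genuinely different edges), so at most one of them lies in $g$, and the other then supplies points on both sides — wait, more carefully: pick the segment not contained in $g$; it has points strictly off $g$; combined with the \emph{other} middle segment (on the opposite side of $\ell$) one gets points on both sides of $g$. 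I would organize this as: the two segments $S_1=Z_P(\ur(\varphi_i))$, $S_2=Z_P(\ur(\varphi_{i+1}))$ span, together with $x$, a nondegenerate triangle (nondegenerate because their directions differ and they lie on opposite sides of $\ell$), so $x$ lies in the relative interior of the segment joining an interior point of $S_1$ to an interior point of $S_2$; hence $x\in{\rm conv}\,{\mathcal M}_P$ is not extreme. The remaining case $g=\ell$: then $S_1,S_2$ lie on strictly opposite sides of $g=\ell$, so again $x$ is a proper convex combination of points of ${\mathcal M}_P$ on either side. In all cases $x$ is not a vertex (not even an exposed or extreme point) of ${\rm conv}\,{\mathcal M}_P$.

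The main obstacle I anticipate is purely bookkeeping: pinning down precisely which of $p,q$ equals the common vertex $v=E_i\cap E_{i+1}$ and verifying, from the convexity of $P$ and the adjacency hypothesis, that the two middle segments $\tfrac12[E_i+q]$ and $\tfrac12[E_{i+1}+q]$ genuinely emanate from $x$ into opposite open half-planes of $\ell$ rather than, say, overlapping or both lying (weakly) on one side. This rests on the orientation facts about outer/inner normals of adjacent versus non-adjacent consecutive pairs recorded just before the statement, plus the elementary fact that the two edges of a convex polygon at a vertex $v$, translated to emanate from $x$, straddle any line through $x$ and $q$ provided $q\neq v$ — which is where I'd have to be slightly careful if $q$ happened to be collinear with the two edge directions, a degenerate situation excluded because $P$ has interior points and $E_i,E_{i+1}$ are non-parallel. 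Everything else — continuity, the convex-hull argument — is immediate.
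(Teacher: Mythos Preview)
Your local argument does not work. The crucial step is the claim that, because the two middle segments $S_1=Z_P(\ur(\varphi_i))$ and $S_2=Z_P(\ur(\varphi_{i+1}))$ emanate from $x$ into opposite open half-planes of the line $\ell$ through $p$ and $q$, the point $x$ ``lies in the relative interior of the segment joining an interior point of $S_1$ to an interior point of $S_2$''. This is false. Two segments sharing the endpoint $x$ and pointing into opposite sides of $\ell$ still form a ``V'' with apex $x$, and $x$ is then a vertex of ${\rm conv}(S_1\cup S_2)$, not an interior point. Concretely, take $p=(10,0)$, $q=(-10,0)$, $E_i=[p,(8,-3)]$, $E_{i+1}=[p,(8,3)]$: then $x=0$, $S_1$ runs from $0$ to $(-1,-1.5)$, $S_2$ from $0$ to $(-1,1.5)$, and $0$ is the rightmost point of ${\rm conv}(S_1\cup S_2)$. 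Equivalently, for the line $g$ through $x$ perpendicular to $\ell$, \emph{both} $S_1$ and $S_2$ lie on the same side of $g$, so your case analysis for $g\neq\ell$ breaks down. Knowing only that $S_1,S_2$ straddle $\ell$ tells you nothing about lines through $x$ other than $\ell$ itself.

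The paper's proof is, necessarily, a global one. Assuming $x$ is a vertex of ${\rm conv}\,{\mathcal M}_P$, one places $x=0$ so that the supporting line $L$ at $x$ is the $e_1$-axis and ${\rm conv}\,{\mathcal M}_P\setminus\{0\}$ lies strictly above $L$. One then tracks the intersection $f(\varphi)e_1$ of the middle line $M_P(\ur(\varphi))$ with $L$ and shows, via $f'(\varphi)=\langle m_P(\ur(\varphi)),e_2\rangle/\cos^2\varphi\ge 0$, that $f$ is weakly increasing on $(-\pi/2,\pi/2)$. If in addition $x$ is a \emph{weak} corner, the adjacent pair $(E_i,E_{i+1})$ meets at the vertex supported by $H(P,\ur(-\pi/2))$, so the corresponding $\varphi$-interval wraps around $\pm\pi/2$; hence $f$ vanishes on neighbourhoods of both endpoints of $(-\pi/2,\pi/2)$. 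Monotonicity then forces $f\equiv 0$, contradicting that $P$ is not centrally symmetric. The global input---all middle lines, not just the two segments at $x$---is exactly what your argument is missing.
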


\begin{proof}
We begin with an arbitrary vertex $x$ of ${\rm conv}{\mathcal M}_P$. Since ${\mathcal M}_P$ is the union of the finitely many middle sets $Z_P(\ur(\varphi_i))$ (with $\varphi_i$ as above), the point $x$ must be one of the endpoints of these segments, thus $x$ is either a weak or a strong corner of ${\mathcal M}_P$. 

We need to recall some facts from the proof of Lemma 6 in \cite{Sch16}. As there, we may assume, without loss of generality (after applying a rigid motion to $P$), that $x=0$ and that the orthonormal basis $(e_1,e_2)$ of ${\mathbb R}^2$  is such that
\begin{equation}\label{3.1}
\langle y,e_2\rangle >0 \quad \mbox{for each } y\in {\rm conv}{\mathcal M}_P \setminus\{0\}.
\end{equation}
Let $L$ be the line through $0$ that is spanned by $e_1$. For $\varphi\in (-\pi/2,\pi/2)$, the middle line $M_P(\ur(\varphi))$ intersects the line $L$ in a point which we write as $f(\varphi)e_1$, thus defining a continuous function $f:(-\pi/2,\pi/2)\to{\mathbb R}$. It was shown in \cite{Sch16} that 
$$ f(\varphi) =\frac{p(\varphi)}{\cos\varphi}.$$
At almost all $\varphi$, the functions $\varphi\mapsto h(P,\ur(\varphi))$ and $\varphi\mapsto h(P,-\ur(\varphi))$ are differentiable, hence the same holds for the function $f$, and where this holds, we have
\begin{equation}\label{3.2}
f'(\varphi)= \frac{\langle m_P(\ur(\varphi)),e_2\rangle}{\cos^2\varphi},
\end{equation}
as shown in \cite{Sch16}.

We now first recall the rest of the proof of Lemma 6 in \cite{Sch16}, in a slightly simplified version. The claim to be proved is that
\begin{equation}\label{3.3}
0 \in M_P(\ur(\varphi))  \mbox{ for some }\varphi\in(-\pi/2,\pi/2)\quad  \Longrightarrow \quad 0\in Z_P(\ur(\varphi)).
\end{equation}

By (\ref{3.2}) and (\ref{3.1}) we have $f'(\varphi)\ge 0$ for almost every $\varphi\in(-\pi/2,\pi/2)$. We conclude that the function $f$ (which is locally Lipschitz and hence the integral of its derivative) is weakly increasing on $(-\pi/2, \pi/2)$. Therefore, the set $I:=\{\varphi\in(-\pi/2, \pi/2): f(\varphi)=0\}$ is a closed interval (possibly one-pointed). Since $0\in{\mathcal M}_P$, there is some $\varphi_0\in (-\pi/2,\pi/2)$ with $0\in Z_P(\ur(\varphi_0))$. If $I$ is one-pointed, then $I=\{\varphi_0\}$, and $0\notin M_P(\ur(\varphi))$ for $\varphi\not=\varphi_0$. Thus, (\ref{3.3}) holds in this case. If $I$ is not one-pointed, then $f'(\varphi)=0$ for $\varphi \in {\rm relint}\,I$ and hence, by (\ref{3.2}) and (\ref{3.1}), $m_P(\ur(\varphi))=0$ for $\varphi \in {\rm relint}\,I$. By continuity, we have $0\in Z_P(\ur(\varphi))$ for all $\varphi\in I$. This shows that (\ref{3.3}) holds generally.

Now we can finish the proof of Lemma \ref{Lem4}. Suppose, to the contrary, that $0$ is a weak corner of ${\mathcal M}_P$. Then there is a consecutive, adjacent pair $(E_i,E_{i+1})$ of edges of $P$ such that $E_i\cap E_{i+1}=\{p\}$ for a vertex $p$ of $P$ and the line $H(P,\ur(-\pi/2))$ supports $P$ at $p$. This is only possible if $(E_i,E_{i+1})= (E_k,E_{k+1})$. In this case, all the middle lines $M_P(\psi)$ with $\psi$ between $\varphi_k$ and $\varphi_{k+1}=\varphi_1$ pass through $0$. This means that the function $f$ defined above satisfies $f(\varphi)=0$ for $-\pi/2<\varphi \le \varphi_1$ and for $\varphi_k\le\varphi<\pi/2$. But since $f$ is increasing, it must then vanish identically, which is a contradiction, since $P$ is not centrally symmetric. This contradiction completes the proof of Lemma \ref{Lem4}.
\end{proof}

\vspace{2mm}

\noindent{\em Proof of Lemma} \ref{Lem3}. Let $k\in{\mathbb N}$. Since ${\mathcal A}_k$ is closed by Lemma \ref{Lem2}, the proof that ${\mathcal A}_k$ is nowhere dense amounts to showing that ${\mathcal A}_k$ has empty interior in ${\mathcal K}^2_*$. For this, let $K\in {\mathcal A}_k$ and $\varepsilon>0$ be given. We show that the $\varepsilon$-neighborhood of $K$ contains an element of ${\mathcal K}^2_*\setminus {\mathcal A}_k$. 

In a first step, we choose a convex polygon $P$ with
\begin{equation}\label{3.0} 
K\subset {\rm int} P,\quad P \subset {\rm int}(K+\varepsilon B^2),
\end{equation}
where $B^2$ denotes the closed unit disc of ${\mathbb R}^2$. We can do this in such a way that $P$ satisfies the following assumptions. First, $P$ has no pair of parallel edges. Second, $P$ has no `long' edge, by which we mean an edge the endpoints of which are opposite points of $P$. The goal of the following is to perform small changes on the polygon $P$ so that the number of vertices of ${\rm  conv}{\mathcal M}_P$ is increased.

Let $x$ be a vertex of ${\rm conv}{\mathcal M}_P$. It is a corner of ${\mathcal M}_P$, and by Lemma \ref{Lem4} a strong corner. Therefore, there is a consecutive, non-adjacent edge pair $(E_i,E_{i+1})$ of $P$ and there are an endpoint $p$ of $E_i$ and an endpoint $q$ of $E_{i+1}$ such that $x=(p+q)/2$. 

We position $P$ and choose the orthonormal basis $(e_1,e_2)$ in such a way that $x=0$, that $e_1$ is a positive multiple of $q$, and that $\langle y,e_2\rangle\ge 0$ for all $y\in E_i\cup E_{i+1}$ (note that $E_i$ and $E_{i+1}$ lie on the same side of the line through $p$ and $q$, since $0$ is a strong corner of ${\mathcal M}_P$).

We may assume (the other case is treated similarly) that $\ur(\varphi_i)$ is the inner normal vector of $E_i$; then $\ur(\varphi_{i+1})$ is the outer normal vector of $E_{i+1}$. Let $E_j\not= E_i$ be the other edge of $P$ with endpoint $p$, and let $E_m\not= E_{i+1}$ be the other edge of $P$ with endpoint $q$. The edges $E_j$ and $E_m$ do not lie in the line through $p$ and $q$, since $P$ has no long edge. We have $\varphi_m<\varphi_i< \varphi_{i+1} < \varphi_j$, since $\ur(\psi)$ with $\psi$ between $\varphi_i$ and $\varphi_{i+1}$ is not a normal vector of an edge of $P$. 

\vspace{-1.2cm}

\begin{center}
\resizebox{15cm}{!}{
\setlength{\unitlength}{1cm}
\begin{pspicture}(0,0)(14,11)

\psline[linewidth=1pt]{*-*}(1.3,3.64)(12,3.64) 
\psline[linewidth=1pt]{-}(7.6,0.75)(12,3.64) 
\psline[linewidth=1pt]{-}(5,0.75)(1.3,3.64) 
\psline[linewidth=0.5pt]{*-}(6.65,3.64)(5.4,6.5) 
\psline[linewidth=0.5pt]{-}(6.65,3.64)(7.7,6.5) 
\psline[linestyle=dashed]{-}(4.5,2.7)(8.8,4.59) 
\psline[linewidth=1pt]{-}(1.3,3.64)(3.5,9.2) 
\psline[linewidth=1pt]{*-*}(2.78,2.5)(2.95,7.81) 
\psline[linewidth=1pt]{*-}(9.199,1.8)(10.4,8.9) 
\psline[linewidth=1pt]{-}(9.14,5.3)(12.7,6.94) 
\psline[linewidth=1pt]{-}(12,3.64)(9.6,9.36) 
\psline[linewidth=1pt]{-}(9.1,8.73)(12.7,6) 
\psline{*-}(10.22,7.87)(10.22,7.87)
\psline[linewidth=1pt]{-*}(9.199,1.8)(10.702,6.734) 
\psline{*-}(11.91,6.59)(11.91,6.59) 
\psline{*-}(10.45,5.9)(10.45,5.9)

\rput(3.3,9.6){$E_i$}
\rput(10,9.6){$E_{i+1}$}
\rput(4.1,0.8){$E_j$}
\rput(8.4,0.8){$E_m$}
\rput(0.9,3.6){$p$}
\rput(12.3,3.6){$q$}
\rput(2.2,7.9){$p+t_2$}
\rput(11,7.9){$q+s_2$}
\rput(2,2.3){$p+t_1$}
\rput(9.9,1.7){$q+s_1$}
\rput(10.8,6.9){$q+s$}
\rput(13.3,6.5){$q+s_2+t_1$}
\rput(11.42,5.7){$q+s_1+t_2$}
\rput(6.65,3.3){$0$}
\rput(8,4.6){$S$}
\rput(10.4,4.6){$L_q$}

\end{pspicture}
}
\end{center}

\vspace{-8mm}

\noindent Figure 2: The vertices $p$ and $q$ are cut off by new edges, in the figure with endpoints $p+t_1, p+t_2$, respectively $q+s_1,q+s_2$.

\vspace{5mm}


By assumption, $0$ is a vertex of ${\rm conv}{\mathcal M}_P$. Therefore, there is a support line $S$ of ${\rm conv}{\mathcal M}_P$ which has intersection $\{0\}$ with ${\rm conv}{\mathcal M}_P$. Since $S$ supports also the convex hull of $Z_P(\ur(\varphi_i))$ and $Z_P(\ur(\varphi_{i+1}))$, the (with respect to ${\rm conv}{\mathcal M}_P$) outer unit normal vector $\ur(\alpha)$ of the support line $S$ has an angle $\alpha$ that satisfies either $-\pi/2 \le \alpha <\varphi_i$ or $\varphi_{i+1}-\pi/2<\alpha < -\pi/2$. We assume that  $-\pi/2 \le \alpha <\varphi_i$; the other case is treated analogously, with the roles of $p,E_i,E_j$ and $q,E_{i+1},E_m$ interchanged.

In the following, $t_1$ and $t_2$ denote vectors such that $p+t_1\in E_j$ and $p+t_2\in E_i$. For such vectors, let $\psi_p=\psi_p(t_1,t_2)$ with $\varphi_i < \psi_p < \varphi_{i+1}$ be the angle for which $\ur(\psi_p)$ is orthogonal to the line through $p+t_1$ and $p+t_2$. Trivially, there are a constant $c>0$ and a continuous function $\gamma: [\varphi_i, \varphi_{i+1}] \to {\mathbb R}^+$ with $\lim_{\psi\to \varphi_i}\gamma(\psi) =0$ such that
\begin{align}\label{3.6}  
\|t_1\| <c\|t_2\|\quad &\Longrightarrow  \quad \psi_p(t_1,t_2) <\varphi_{i+1},\\
\label{3.7}
\psi\in [\varphi_i, \varphi_{i+1}]\mbox{ and } \|t_1\| >\gamma(\psi) \|t_2\|\quad  & \Longrightarrow  \quad  \psi_p(t_1,t_2)> \psi. 
\end{align}

Let $L_q$ be a line parallel to $E_i$ and strongly separating $q$ from the other endpoints of $E_{i+1}$ and $E_m$. This line intersects $E_m$ in a point $q+s_1$, and it intersects $E_{i+1}$ in a point $q+s$. We choose the line $L_q$ so close to $q$ that the vector $t:= s-s_1$ satisfies $p+t\in E_i$. 

Let $0<\tau<1$, and let $\sigma>1$ be such that $q+\sigma s\in E_{i+1}$. The line through the point $q+s_1+\tau t$ parallel to the support line $S$ and the line through $q+\sigma s$ parallel to $E_j$ intersect in a point $q+\sigma s+t_1$. This defines a vector function $t_1=t_1(\tau,\sigma)$, with the property that $\|t_1\|$ is strictly increasing in $\sigma$. For $\tau,\sigma\to 1$ we have $\|t_1\|\to 0$; in particular, $p+t_1\in E_j$ if $\tau,\sigma$ are sufficiently close to $1$. Therefore, we can fix $t_2=\tau t $ (so that $t_1$ now depends only on $\sigma$) and choose $\sigma_0>1$ such that 
$$ p+t_1(\sigma)\in E_j\quad\mbox{and}\quad\|t_1(\sigma)\|<c\|t_2\|\quad\mbox{for } 1<\sigma\le \sigma_0.$$ 
Let $\psi_q(\sigma)\in(\varphi_i,\varphi_{i+1})$ be the angle for which $\ur(\psi_q)$ is orthogonal to the line through $q+s_1$ and $q+\sigma s$. We choose $\sigma_1$ with $1<\sigma_1<\sigma_0$ so close to $1$ that
$$ \gamma(\psi_q(\sigma_1)) < \|t_1(\sigma_1)\|/\|t_2\|,$$
which is possible because of $\lim_{\psi\to \varphi_i}\gamma(\psi) =0$ and $\lim_{\sigma\to 1}\|t_1(\sigma)\|>0$. For $\sigma\in (\sigma_1,\sigma_0]$ sufficiently close to $\sigma_1$ we then have
$$ \gamma(\psi_q(\sigma)) < \|t_1(\sigma)\|/\|t_2\| \le \|t_1(\sigma_0)\|/\|t_2\| <c.$$
Therefore, by (\ref{3.6}) and (\ref{3.7}), the angles $\psi_q=\psi_q(\sigma)$ and $\psi_p=\psi_p(t_1(\sigma), t_2)$ satisfy
\begin{equation}\label{3.9}
\varphi_i <\psi_q <\psi_p<\varphi_{i+1}.
\end{equation}
In the following we write $t_1(\sigma)=t_1$ and $\sigma s= s_2$.

Now we choose a number $0<\lambda<1$ and replace $p+t_1, p+t_2, q+s_1, q+s_2$ respectively by $p+\lambda t_1, p+ \lambda t_2, q+\lambda s_1, q+\lambda s_2$. This does not change the angles $\psi_p,\psi_q$. We replace $P$ by the polygon $P_\lambda$ that is the convex hull of the points $p+\lambda t_1, p+ \lambda t_2, q+\lambda s_1, q+\lambda s_2$ and of the vertices of $P$ different from $p$ and $q$. By choosing $\lambda$ sufficiently small, we can achieve that still
$$ K\subset {\rm int} P_\lambda.$$
Note that $P_\lambda \subset {\rm int}(K+\varepsilon B^2)$ holds trivially. 

By decreasing $\lambda$ further, if necessary, we can also achieve that ${\rm conv}{\mathcal M}_{P_\lambda}$ has more vertices than ${\rm conv}{\mathcal M}_P$, as we now show. First we notice that the inequalities (\ref{3.9}) imply that $p+\lambda t_2$ and $q+\lambda s_1$ are opposite vertices of $P_\lambda$ and that also $p+\lambda t_1$ and $q+\lambda s_2$ are opposite vertices of $P_\lambda$. Hence,
$$\frac{1}{2}(p+\lambda t_2 + q+\lambda s_1) = \frac{\lambda}{2}(s_1+t_2)=:y_\lambda$$
and 
$$ \frac{1}{2}(p+\lambda t_1 +q+\lambda s_2)= \frac{\lambda}{2}(s_2+t_1)=: z_\lambda$$ 
are strong corners of ${\mathcal M}_{P_\lambda}$. By construction,
\begin{equation}\label{3.4}
z_\lambda-y_\lambda = \frac{\lambda}{2}[(q+s_2+t_1)-(q+s_1+t_2)] \quad\mbox{is parallel to } S.
\end{equation}
Since the non-zero vectors $s_1-s_2$ and $t_1-t_2$ have different directions, we have $y_\lambda\not= z_\lambda$.

Let $v_0=0,v_1,\dots,v_r$ be the vertices of ${\rm conv}{\mathcal M}_P$. They are corner points of ${\mathcal M}_P$. To each $i\in\{0,\dots,r\}$ we choose a line $L_i$ that strongly separates $v_i$ from the other vertices; the particular line $L_0$ is chosen parallel to the support line $S$. We can choose a number $\eta>0$ such that, for any  $\bar v_0,\dots,\bar v_r \in {\mathbb R}^2$  with $\|\bar v_i- v_i\| < \eta$ for $i=0,\dots,r$, the line $L_i$ strongly separates $\bar v_i$ from the points $\bar v_j\not= \bar v_i$. Then we can further decrease $\lambda$ so that the $\eta$-neighborhood of each $v_i$, $i=1,\dots,r$, contains at least one corner point of ${\mathcal M}_{P_\lambda}$, and that the $\eta$-neighborhood of $0$ contains the points $y_\lambda$ and $z_\lambda$. Since $L_0$ is parallel to $S$, it follows from (\ref{3.4}) that $y_\lambda$ and $z_\lambda$ are both vertices of ${\rm conv}{\mathcal M}_{P_\lambda}$. Thus, ${\rm conv}{\mathcal M}_{P_\lambda}$ has more vertices than ${\rm conv}{\mathcal M}_P$.

Since (\ref{3.0}) with $P$ replaced by $P_\lambda$ still holds, we can repeat the procedure. After finitely many steps, we obtain a polygon $Q$ with
\begin{equation}\label{3.5} 
K\subset {\rm int}\, Q,\quad Q \subset {\rm int}(K+\varepsilon B^2)
\end{equation}
for which ${\mathcal M}_{Q}$ has more than $k$ vertices. Finally, we replace $Q$ by a strictly convex body $M$, by replacing each edge of $Q$ by a circular arc of large positive radius $R$. If $R$ is large enough, then (\ref{3.5}), with $Q$ replaced by $M$, still holds, and the number of vertices of ${\rm conv}{\mathcal M}_M$ is the same as for ${\rm conv}{\mathcal M}_{Q}$. Thus, in the $\varepsilon$-neighborhood of $K$ we have found an element of ${\mathcal K}^2_*\setminus {\mathcal A}_k$. \qed

\noindent Author's address:\\[2mm]
Rolf Schneider\\
Mathematisches Institut, Albert-Ludwigs-Universit{\"a}t\\
D-79104 Freiburg i. Br., Germany\\
E-mail: rolf.schneider@math.uni-freiburg.de


\begin{thebibliography}{10}

\bibitem{Cra14} M. Craizer, Iteration of involutes of constant width curves in the Minkowski plane. {\em Beitr. Algebra Geom.} {\bf 55} (2014), 479--496.

\bibitem{Gib08} P. Giblin, Affinely invariant symmetry sets. {\em Geometry and topology of caustics--CAUSTICS '06}, 71--84, {\em Banach Center Publ.} {\bf 82}, Polish Acad. Sci. Inst. Math., Warsaw, 2008.

\bibitem{Gru85} P. M. Gruber, {\em Results of Baire category type in convexity,} in \textit{Discrete Geometry and Convexity} (J. E. Goodman, E. Lutwak, J. Malkevitch, R. Pollack, eds.), Ann. New York Acad. Sci., \textbf{440} (1985), 163--169.
\bibitem{Gru93} P. M. Gruber, {\em Baire categories in convexity,} in {\em Handbook of Convex Geometry} (P. M. Gruber, J. M. Wills, eds.), vol. B, pp. 1327--1346, North-Holland, Amsterdam 1993.

\bibitem{HS53} P. C. Hammer, A. Sobczyk, Planar line families, II. {\em Proc. Amer. Math. Soc.} {\bf 4} (1953), 341--349.

\bibitem{Hol01} P. A. Holtom, Affine-invariant symmetry sets. PhD Dissertation, Liverpool 2001. Available on http://www.liv.ac.uk/$\sim$pjgiblin

\bibitem{MM95} Y. Martinez--Maure, H\'{e}rissons projectifs et corps convexes de largeur constante. {\em C. R. Acad. Sci. Paris} {\bf 321} (1995), 439--442.

\bibitem{MM97} Y. Martinez--Maure, Sur les h\'{e}rissons projectifs (enveloppes param\'{e}tr\'{e}es par leur application de Gauss). {\em Bull. Sci. math.} {\bf 121} (1997), 585--601.

\bibitem{MM99} Y. Martinez--Maure, \'{E}tude des diff\'{e}rences de corps convexes plans. {\em Ann. Polon. Math.} {\bf 72} (1999), 71--78.

\bibitem{MM06} Y. Martinez--Maure, Geometric study of Minkowski differences of plane convex bodies. {\em Canad. J. Math.} {\bf 58} (2006), 600--624.

\bibitem{Sch14} R. Schneider, {\em Convex Bodies: The Brunn--Minkowski Theory.} Second edition. Encyclopedia of Mathematics and Its Applications {\bf 151}, Cambridge University Press, Cambridge, 2014.

\bibitem{Sch16} R. Schneider, Reflections of planar convex bodies. In {\em Convexity and Discrete Geometry Including Graph Theory}, Mulhouse, France, September 2014 (K. Adiprasito, I. B\'{a}r\'{a}ny, C. V\^{\i}lcu, eds.), pp. 69--76, Springer, 2016.

\bibitem{SW08} R. Schneider, W. Weil, Stochastic and Integral Geometry. Springer, Berlin, 2008.

\bibitem{Zam91} T. Zamfirescu, {\em Baire categories in convexity,} Atti Sem. Fis. Univ. Modena, \textbf{39} (1991), 139--164.
\bibitem{Zam09} T. Zamfirescu, {\em The Majority in Convexity,} Editura Univ. Bucure\c{s}ti, Bucarest, 2009.

\end{thebibliography}
\end{document}